\documentclass[12pt]{article}
\usepackage{amsfonts,amsthm,mathrsfs,indentfirst}
\usepackage[left=3cm,right=3cm,top=2.5cm,bottom=2.5cm]{geometry}
\usepackage{graphics}
\usepackage{authblk}
\usepackage[fleqn]{amsmath}

\newtheorem{thm}{Theorem}

\newtheorem{rem}{Remark}

\def\d{\mathrm{d}}
\def\p{\mathbb{P}}

\def\R{\mathbb{R}}

\def\O{\Omega}
\def\X{\mathbf{X}}
\def\L{\mathscr{L}}

\allowdisplaybreaks
\begin{document}
\title{Constructing Cubature Formulas of Degree 5 with Few Points  \thanks{This project is supported by
NNSFC (Nos.109111440268,10801023,11171052,61033012) and ``the Fundamental
Research Funds for the Central Universities''.}}
\author[a]{Zhaoliang Meng\thanks{Corresponding author: mzhl@
dlut.edu.cn}}
\author[a,b]{Zhongxuan Luo}
\affil[a]{\it \small School of Mathematical Sciences,Dalian
University of Technology, Dalian, 116024, China}
\affil[b]{\it \small School of Software, Dalian
University of Technology, Dalian, 116620, China}

\maketitle
\begin{abstract}
This paper will devote to construct a family of fifth degree cubature formulae
for $n$-cube with symmetric measure and $n$-dimensional spherically
symmetrical region. The formula for $n$-cube contains at most $n^2+5n+3$
points and for $n$-dimensional spherically
symmetrical region contains only $n^2+3n+3$ points. Moreover, the numbers can be
reduced to $n^2+3n+1$ and $n^2+n+1$ if $n=7$ respectively, the latter of which is
minimal.
\\[6pt]
\textbf{Keywords:}
Fifth degree formula; Cubature formula; Product region;Spherically symmetric
region; Numerical integration
\end{abstract}

\section{Introduction}
In this article, the fifth degree formula with respect to the following
integral
\begin{eqnarray}
    \L(f)=\int_{\O}\rho(\X)f(\X)\d\X
    \label{eq:int_prob}
\end{eqnarray}
will be considered where the region $\O\subset \R^n$ and $\rho(\X)\geq 0$ for
$\X\in \O$. We study two cases: one of which
requires $\Omega=[-1,1]^n$ and $\rho(\X)=\rho_1(x_1)\ldots\rho_n(x_n)$ where
$\rho_i(x_i)$ is symmetrical;
and the other of which requires that $\L$ is a spherically symmetric integral
(see the definition of section 3 or refer to \cite{Hirao2009}).

There was a large number of numerical integration literature to
appear in the past several decades. Much of early work is largely
contained in Stroud's book\cite{1971-Stroud-p-} and Mysovskikh's
book\cite{1981-Mysovskikh-p-}. For a recent review, see
\cite{1993-Cools-p309-326,Ronald1999,Ronald2002,Ronald2003}. An on-line database containing many of the
best known cubature formulas is described in Cools' website
http://nines.cs.kuleuven.be/ecf.

Among all the cubature formulae, those with few points received most
attention. Without any special assumption, a general lower bound was
given as follows (see \cite{1997-Cools-p1-54} and the reference
therein): for a cubature formula of degree $2k$ or $2k+1$, the
number of the points $N\geq \dim\p_n^k$, where $\p_n^k$ denotes the
polynomial space of dimension $n$ with degree no more than $k$. For
even degrees, no greater lower bound was given until now, see Cools'
survey paper \cite{1997-Cools-p1-54}. However, for odd degrees
$2k+1$, the lower bound for centrally symmetrical integral can be
improved as follows (see M\"oller \cite{Moller1979}):
\begin{eqnarray}
    N\geq 2\dim \p_n^k-\begin{cases}
        0 & \text{if $k$ is odd},\\
        1 & \text{if $k$ is even},\\
    \end{cases}
    \label{eq:minimal_points}
\end{eqnarray}
or explicitly,
\begin{eqnarray}
    N\geq
    \begin{cases}\displaystyle
        \binom{n+k}{n}+\sum_{s=1}^{n-1}2^{s-n}\binom{s+k}{s} &
        \text{if $k$ is odd,}\\ \displaystyle
        \binom{n+k}{n}+\sum_{s=1}^{n-1}(1-2^{s-n})\binom{s+k-1}{s} &
        \text{if $k$ is even.}\\
    \end{cases}
    \label{eq:minimal_points_explicit}
\end{eqnarray}
Particularly, for the fifth degree case,
\begin{eqnarray*}
    N\geq n^2+n+1.
\end{eqnarray*}

However, there is a big gap between the theoretical lower bound and
the numbers of points in the known cubature formulae, especially for
the higher dimensional case.
 And the gap becomes bigger
and bigger as the dimension and the precision degree increase.
So
far, only for special dimension, the minimal cubature rules can be
obtained. Noskov and Schmid \cite{Noskov2004} derived a necessary
and sufficient condition of existence of the minimal cubature rules
with respect to the integrals over the ball.

The cubature problem on the product region received most attention and much
interesting works appeared, for recent results, see \cite{2011-Meng-p2036-2043} for the third
degree case, \cite{2001-Cools-p15-32,2006-Omelyan-p190-204} for the higher
degree case and the references therein. Besides, a good choice to construct cubature formula
is to employ Smolyak's method
\cite{2000-Barthelmann-p273-288,1998-Gerstner-p209-232,1999-Novak-p499-522,1963-Smolyak-p123-123}.
However, the number of the points is much greater than the
theoretical lower bound. Very recently, starting from Smolyak
method, Hinrichs and Novak \cite{2007-Novak-p1357-1372} presented a
kind of formula with $n^2+7n+1$ points for fifth degree case and
$(n^3+21n^2+20n+3)/3$ points for seventh degree case, which is very close to
the theoretical lower bound. Before them,
except Lu's formula \cite{2005-Lu-p613-624}, the best upper bounds were of the
form ``$\approx 2n^2$'' and ``$\approx 4n^3/3$'' respectively, where
``$\approx$'' denotes the strong
equivalence of sequences. Lu's formula  uses  $n^2+3n+3$ points just for entire $n$-dimensional space with Gaussian weight
function, which is the minimal point formula for the general
dimension among all the known formulae until now.

In this paper we shall present a method to construct fifth degree
formulae with at most $n^2+5n+3$ points for the integrals with the
product form and with at most $n^2+3n+3$ points for the spherically
symmetrical integrals, which employ fewer points than Hinrichs and
Novak's formulae \cite{2007-Novak-p1357-1372} and are more close to
the theoretical lower bound. This paper proceeds as follows. Section
2 will present a method to construct fifth formula for $n$-cube with
symmetrical product measure. And section 3 will devote to the
construction of cubature formulae for spherically symmetrical
integrals.  Section 4 will give some numerical integration
rules by our method. Finally, section 5 will conclude our results.
\section{Fifth degree formula for symmetrical product measure}
We begin with a well-known formula with respect to the surface of sphere:
$$U_n=\{\X\in\R^n:x_1^2+x_2^2+\ldots+x_n^2=1\}.$$ Due to Stroud
\cite{1971-Stroud-p-},
the monomial integrals for $\rho(x_1,x_2,\ldots,x_n)=1$ are
\begin{eqnarray*}
    &&V=\int_{U_n}\d\X=\frac{2\pi^{n/2}}{\Gamma(n/2)},\\
    &&\int_{U_n}x_1^{\alpha_1}x_2^{\alpha_2}\ldots x_n^{\alpha_n}\d
    \X=\frac{2\Gamma[(\alpha_1+1)/2]\ldots\Gamma[(\alpha_n+1)/2]}{\Gamma[(n+\alpha_1+
    \alpha_2+\ldots+\alpha_n)/2]},
\end{eqnarray*}
provided all the $\alpha_i$ are even; otherwise the integral is zero.

Mysovskikh \cite{1981-Mysovskikh-p-} derives a cubature formula of degree 5 ($n\geq 4$) as follows:
\begin{equation}
    \L_1(f)=\int_{U_n}f(\X)d\X\approx
    Q(f)=A\sum_{j=1}^{n+1}\big[f(\mathbf{a}^{(j)})+f(-\mathbf{a}^{(j)})\big]
    +B\sum_{j=1}^{n(n+1)/2}\big[f(\mathbf{b}^{(j)})+f(-\mathbf{b}^{(j)})\big]
    \label{eq:cub_Mysov}
\end{equation}
where
\begin{eqnarray}
    &&\mathbf{a}^{(r)}=(a_1^{(r)},a_2^{(r)},\ldots,a_n^{(r)}),\quad
    r=1,2,\ldots,n+1,\\
    &&a_i^{(r)}=\begin{cases}
        -\sqrt{\frac{n+1}{n(n-i+2)(n-i+1)}},& i<r,\\
        \sqrt{\frac{(n+1)(n-r+1)}{n(n-r+2)}},& i=r,\\
        0,& i>r,
    \end{cases}
\end{eqnarray}
and
\begin{eqnarray}
    \{\mathbf{b}^{(j)}=(b_1^{(j)},b_2^{(j)},\ldots,b_n^{(j)})\}\equiv
    \Big\{\sqrt{\frac{n}{2(n-1)}}(\mathbf{a}^{(k)}+\mathbf{a}^{(l)}):k<l,\ l=1,2,\ldots,n+1\Big\}.
\end{eqnarray}
The corresponding coefficients are
\begin{eqnarray*}
    A&=& n(7-n)V/[2(n+1)^2(n+2)],\\
    B&=& 2(n-1)^2V/[n(n+1)^2(n+2)].
\end{eqnarray*}

We firstly construct a cubature formula of degree five for
the integral with product form:
\begin{equation}
    \L(f)=\int_{[-1,1]^n}\rho_1(x_1)\rho_2(x_2)\ldots\rho_n(x_n)f(\X)\d\X,
    \label{eq:original_integral}
\end{equation}
where
\begin{eqnarray*}
    \int_{-1}^{1}\rho_i(x_i)\d x_i=1\quad\text{and}\quad
    \int_{-1}^{1}\rho_i(x_i)x_i^{2k+1}\d x_i=0 \quad \text{for any nonnegative
    integer $k$}.
\end{eqnarray*}

Define a diagonal matrix $T=\mbox{diag}\{a_{11},a_{22},\ldots,a_{nn}\}$, where
$a_{ii}=\sqrt{\frac{\L(x_i^2)}{\sqrt{\gamma\cdot\L_1(x_1^2x_2^2)}}}$ with a
positive parameter $\gamma$. Let
$\tilde{\mathbf{a}}^{(j)}=T\mathbf{a}^{(j)}$,
$\tilde{\mathbf{b}}^{(k)}=T\mathbf{b}^{(k)}$, $\tilde{A}=\gamma
A,\tilde{B}=\gamma B$ and
\begin{equation}
    \tilde{Q}(f)=\tilde{A}\sum_{j=1}^{n+1}\big[f(\tilde{\mathbf{a}}^{(j)})+f(-\tilde{\mathbf{a}}^{(j)})\big]
    +\tilde{B}\sum_{j=1}^{n(n+1)/2}\big[f(\tilde{\mathbf{b}}^{(j)})+f(-\tilde{\mathbf{b}}^{(j)})\big].
    \label{eq:cub_Mysov1}
\end{equation}
Then
\begin{eqnarray*}
    \tilde{Q}(x_i)&=& 0\\
    \tilde{Q}(x_ix_j)&=&\tilde{A}\sum_{r=1}^{n+1}\big[2a_{ii}a_i^{(r)}\cdot
    a_{jj}a_{j}^{(r)}\big]+\tilde{B}\sum_{r=1}^{n(n+1)/2}\big[2a_{ii}b_i^{(r)}\cdot
    a_{jj}b_{j}^{(r)}\big]\\
    &=&a_{ii}a_{jj}\Big[ \tilde{A}\sum_{r=1}^{n+1}\big[2a_i^{(r)}\cdot
    a_{j}^{(r)}\big]+\tilde{B}\sum_{r=1}^{n(n+1)/2}\big[2b_i^{(r)}\cdot
    b_{j}^{(r)}\big]\Big]\\
    &=& \sqrt{\frac{\L(x_i^2)\L(x_j^2)}{\gamma\L_1(x_1^2x_2^2)}}\cdot
    \gamma\L_1(x_ix_j)\\
    &=& \begin{cases}
        \dfrac{\sqrt{\gamma}\L_1(x_i^2)}{\sqrt{\L_1(x_1^2x_2^2)}}\cdot \L
        (x_i^2), & \text{for}\quad i=j;\\
        0, & \text{for}\quad i\neq j;\\
    \end{cases}\\
        &=& \begin{cases}
{\frac {\sqrt {2}{\pi }^{\frac{n}{4}} \left( n+2 \right) }{2\sqrt {
    \Gamma( \frac{n}{2}+2 ) }}\cdot\sqrt{\gamma}}
        \cdot \L
        (x_i^2), & \text{for}\quad i=j;\\
        0, & \text{for}\quad i\neq j;\\
    \end{cases}
\end{eqnarray*}
\begin{eqnarray*}
    \tilde{Q}(x_ix_kx_l)&=&\tilde{A}\sum_{r=1}^{n+1}\big[a_{ii}a_i^{(r)}\cdot
    a_{kk}a_{k}^{(r)}\cdot a_{ll}a_{l}^{(r)}-a_{ii}a_i^{(r)}\cdot
    a_{kk}a_{k}^{(r)}\cdot a_{ll}a_{l}^{(r)}\big]\\
    &&+\tilde{B}\sum_{r=1}^{n(n+1)/2}\big[a_{ii}b_i^{(r)}\cdot
    a_{kk}b_{k}^{(r)}\cdot a_{ll}b_{l}^{(r)}-a_{ii}b_i^{(r)}\cdot
    a_{kk}b_{k}^{(r)}\cdot a_{ll}b_{l}^{(r)}\big]\\
    &=&0
\end{eqnarray*}
\begin{eqnarray*}
    \tilde{Q}(x_ix_jx_kx_l)&=&\tilde{A}\sum_{r=1}^{n+1}\big[2a_{ii}a_i^{(r)}\cdot
    a_{jj}a_j^{(r)}\cdot a_{kk}a_k^{(r)}\cdot a_{ll}a_l^{(r)}\big]\\
    &&+\tilde{B}\sum_{r=1}^{n(n+1)/2}\big[2a_{ii}b_i^{(r)}\cdot
    a_{jj}b_j^{(r)}\cdot a_{kk}b_k^{(r)}\cdot a_{ll}b_l^{(r)}\big]\\
    &=&a_{ii}a_{jj}a_{kk}a_{ll}\Big[\tilde{A}\sum_{r=1}^{n+1}\big[2a_i^{(r)}
    a_j^{(r)}a_k^{(r)}a_l^{(r)}\big]
    +\tilde{B}\sum_{r=1}^{n(n+1)/2}\big[2b_i^{(r)}
    b_j^{(r)}b_k^{(r)}b_l^{(r)}\big]\Big]\\
    &=& \frac{\sqrt{\L(x_i^2)\L(x_j^2)\L(x_k^2)\L(x_l^2)}}{\L_1(x_1^2x_2^2)}\cdot
    \L_1(x_ix_jx_kx_l)\\
    &=& \begin{cases}
        \L (x_i^2x_k^2), & \text{for}\quad i=j\ \text{and}\ k=l;\\
        \left(\L (x_i^2)\right)^2\cdot\frac{\L_1(x_i^4)}{\L_1(x_1^2x_2^2)}, &
        \text{for}\quad i=j=k=l;\\
        0, & \text{others}
    \end{cases}\\
    &=& \begin{cases}
        \L (x_i^2x_k^2), & \text{for}\quad i=j\ \text{and}\ k=l;\\
        3\left(\L (x_i^2)\right)^2, &
        \text{for}\quad i=j=k=l;\\
        0, & \text{others}
    \end{cases}
\end{eqnarray*}
and similarly $\tilde{Q}[x_ix_jx_kx_lx_m]=0$. Define $\tilde{\L}=\L-\tilde{Q}$
and compute the moments as follows:
\begin{eqnarray*}
    &&\tilde{\L}(1)=1-\gamma\cdot V,\\
    &&\tilde{\L}(x_i)=0,\\
    &&\tilde{\L}(x_ix_j)=\begin{cases}
        0, & \text{for}\quad i\neq j,\\
        \L(x_j^2)\cdot\left(
    1-{\frac {\sqrt {2}{\pi }^{\frac{n}{4}} \left( n+2 \right) }{2\sqrt {
    \Gamma( \frac{n}{2}+2 ) }}\cdot\sqrt{\gamma}}
    \right), & \text{for}\quad i=j,\\
    \end{cases}\\
    &&\tilde{\L}(x_ix_jx_k)=0,\\
    &&\tilde{\L}(x_ix_jx_kx_l)=\begin{cases}
        \L(x_j^4)-3\left(\L(x_j^2)\right)^2,&
        \text{for}\quad i=j=k=l,\\
        0, & \text{others,}\\
    \end{cases}\\
    &&\tilde{\L}(x_ix_jx_kx_lx_m)=0.
    \label{eq:moment}
\end{eqnarray*}
\begin{rem}
    If $\tilde{R}(f)$ is a cubature formula of degree five with respect to
    $\tilde{\L}$, then $\L\approx \tilde{Q}+\tilde{R}$ is a
    cubature formula of degree five with respect to $\L$.
\end{rem}

Now it is the time to state our main result.
\begin{thm}\label{th:mainresult}
There exists a cubature formula $C_n$ for $\L$ of degree five which
uses at most $n^2+5n+3$ points.
\end{thm}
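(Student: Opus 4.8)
\section*{Proof proposal}

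My plan is to exploit the Remark: since $\tilde{Q}$ already integrates $\L$ correctly up to the residual functional $\tilde{\L}=\L-\tilde{Q}$, it suffices to exhibit a degree-five cubature $\tilde{R}$ for $\tilde{\L}$ and then take $C_n=\tilde{Q}+\tilde{R}$. The node count of $\tilde{Q}$ is $2(n+1)+n(n+1)=(n+1)(n+2)=n^2+3n+2$, so to reach the advertised bound $n^2+5n+3$ I am allowed to spend at most $2n+1$ nodes on $\tilde{R}$. The whole construction therefore reduces to building a degree-five rule for the very sparse functional $\tilde{\L}$ with a budget of $2n+1$ points.

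The structure of $\tilde{\L}$ is the key simplification already recorded above: through degree five its \emph{only} nonvanishing moments are $\tilde{\L}(1)=1-\gamma V$, the pure squares $\tilde{\L}(x_i^2)=\L(x_i^2)\bigl(1-c\sqrt{\gamma}\bigr)$ with $c=\tfrac{\sqrt{2}\,\pi^{n/4}(n+2)}{2\sqrt{\Gamma(n/2+2)}}$, and the pure fourth powers $\tilde{\L}(x_i^4)=\L(x_i^4)-3\bigl(\L(x_i^2)\bigr)^2$; every mixed and every odd moment is zero. This diagonal profile suggests the ansatz
\begin{equation*}
\tilde{R}(f)=w_0 f(\mathbf{0})+\sum_{i=1}^{n}w_i\bigl[f(t_i\mathbf{e}_i)+f(-t_i\mathbf{e}_i)\bigr],
\end{equation*}
a central node together with $n$ symmetric axial pairs, which uses exactly $1+2n=2n+1$ points. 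By the $\pm$ symmetry all odd moments of $\tilde{R}$ vanish, and because each nonzero coordinate of an axial node is isolated on its own axis, every genuinely mixed even moment (such as $x_j x_k$ or $x_j^2 x_k^2$ with $j\neq k$) also vanishes. Thus $\tilde{R}$ automatically matches $\tilde{\L}$ on all the moments that are forced to be zero, and only the diagonal moments remain to be fixed.

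Matching the diagonal moments gives, for each $i$, the pair of equations $2w_i t_i^2=\tilde{\L}(x_i^2)$ and $2w_i t_i^4=\tilde{\L}(x_i^4)$, whence $t_i^2=\tilde{\L}(x_i^4)/\tilde{\L}(x_i^2)$ and $w_i=\tilde{\L}(x_i^2)^2/\bigl(2\tilde{\L}(x_i^4)\bigr)$; the constant moment then pins down $w_0=\tilde{\L}(1)-2\sum_{i=1}^n w_i$. These are explicit formulas, so the routine verification that $C_n=\tilde{Q}+\tilde{R}$ integrates every monomial of degree at most five is immediate once feasibility is secured. The phrase ``at most'' in the statement is accounted for here: whenever $\tilde{\L}(x_i^4)=0$ the corresponding axial pair is unnecessary, and whenever the resulting $w_0$ happens to be zero the central node may be dropped, so the true count can fall below $2n+1$ and any accidental coincidences between the new nodes and those of $\tilde{Q}$ only reduce it further.

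The main obstacle is feasibility, namely that $t_i^2$ must be nonnegative so that the node $t_i\mathbf{e}_i$ is real; this is precisely the requirement that $\tilde{\L}(x_i^2)$ and $\tilde{\L}(x_i^4)$ share a sign. Here the free positive parameter $\gamma$ built into $\tilde{Q}$ is exactly the lever I intend to use: since $\L(x_i^2)>0$, the sign of $\tilde{\L}(x_i^2)=\L(x_i^2)(1-c\sqrt{\gamma})$ is governed solely by the threshold $\gamma=1/c^2$, so choosing $\gamma<1/c^2$ or $\gamma>1/c^2$ makes every $\tilde{\L}(x_i^2)$ positive or negative at will. I would therefore select $\gamma$ on the side of $1/c^2$ that aligns $\tilde{\L}(x_i^2)$ with the sign of $\tilde{\L}(x_i^4)=\L(x_i^4)-3(\L(x_i^2))^2$, which renders all $t_i^2\ge 0$ and completes the construction; confirming that a single admissible $\gamma$ can be chosen uniformly in $i$ is the delicate point that the detailed argument must settle.
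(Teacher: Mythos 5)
Your proposal is correct and follows essentially the same route as the paper: the same decomposition $C_n=\tilde{Q}+\tilde{R}$, the same ansatz of a central node plus $n$ symmetric axial pairs (the paper's $v_{i,2}=-v_{i,1}$, $w_{i,1}=w_{i,2}$, and $C$), identical node and weight formulas $t_i^2=\tilde{\L}(x_i^4)/\tilde{\L}(x_i^2)$, $w_i=\bigl(\tilde{\L}(x_i^2)\bigr)^2/\bigl(2\tilde{\L}(x_i^4)\bigr)$, and the same use of the free parameter $\gamma$ to force $\tilde{\L}(x_i^2)\cdot\tilde{\L}(x_i^4)>0$. The only difference is presentational: you explicitly flag the uniform-in-$i$ choice of $\gamma$ as the delicate point, which the paper compresses into the phrase ``by a proper parameter $\gamma$'' and addresses only in the remark following the theorem.
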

\begin{proof}
    Suppose that an integration formula $\tilde{R}$ of degree five for $\tilde{\L}$ is given by
    \begin{equation}
        \tilde{R}(f)=\sum_{i=1}^n\sum_{j=1}^2w_{ij}
        f(v_{i,j}\mathbf{e}_i)+Cf(0,0,\ldots,0)
     \label{eq:cub_form}
\end{equation}
where $\mathbf{e}_i=(0,\ldots,0,1,0,\ldots,0)$ is a vector of dimension
$n$ and the $i$th element one is the only nonzero element. To enforce
polynomial exactness of degree five, it suffices to require
\eqref{eq:cub_form} to satisfy the following equations:
\begin{equation}\label{eq:momentproblem}
    \begin{split}
    &\sum_{j=1}^2
    w_{ij}(v_{i,j})^m=\tilde{\L}(x_i^m),\quad
    m=1,2,3,4,5,i=1,2,\ldots,n,\\
    &\sum_{i=1}^n\sum_{j=1}^2w_{ij}+C=\tilde{\L}(1).
    \end{split}
\end{equation}

By a proper parameter $\gamma$, we can make
$\tilde{\L}(x_i^2)\cdot\tilde{\L}(x_i^4)>0$.
Thus we get the required cubature formula $\tilde{R}$ with
\begin{eqnarray*}
    &&v_{i,1}=\sqrt{\frac{\tilde{\L}(x_i^4)}{\tilde{\L}(x_i^2)}},\quad
    v_{i,2}=-v_{i,1},\quad
    w_{i,1}=w_{i,2}=\frac{\big(\tilde{\L}(x_i^2)\big)^2}{2\tilde{\L}(x_i^4)},\
    i=1,2,\ldots,n,\\
    &&C=\tilde{L}(1)-\sum_{i=1}^n(w_{i,1}+w_{i,2}).
\end{eqnarray*}
The corresponding cubature formula can be written as
\begin{equation}
    \begin{split}
    \L(f)\approx &\tilde{A}\sum_{j=1}^{n+1}\big[f(\tilde{\mathbf{a}}^{(j)})+f(-\tilde{\mathbf{a}}^{(j)})\big]
    +\tilde{B}\sum_{j=1}^{n(n+1)/2}\big[f(\tilde{\mathbf{b}}^{(j)})+f(-\tilde{\mathbf{b}}^{(j)})\big]\\
    &+\sum_{i=1}^n\sum_{j=1}^2w_{ij}
        f(v_{i,j}\mathbf{e}_i)+Cf(0,0,\ldots,0).
    \end{split}
    \label{eq:final_cubature}
\end{equation}
The total number of the points is at most $(n^2+3n+2)+(2n+1)=n^2+5n+3$.
\end{proof}

\begin{rem}
    To ensure all the points inside the integration region, it is required
    that
    \begin{eqnarray}
        a_{ii}\leq 1,\quad\text{and}\quad
        0<\frac{\tilde{\L}(x_i^4)}{\tilde{\L}(x_i^2)}\leq 1.
        \label{eq:constrained_condition}
    \end{eqnarray}
    If $\L(x_j^4)<3\L(x_j^2)^2$, then
    \begin{equation}\label{eq:condition}
        \begin{split}
        & \gamma\geq 2\pi^{-\frac{n}{2}}{\Gamma\left(n/2+2 \right)
         \big(\L(x_i^2)\big)^2},\quad
        \sqrt{\gamma}>{\frac {\sqrt {2\Gamma  \left( n/2+2 \right) }}{(n+2){\pi
        }^{n/4}}},\\
        & \sqrt{\gamma}\geq
        \frac{\left(\L(x_i^2)-\L(x_i^4)+3\L(x_i^2)^2\right)}{\L(x_i^2)}\cdot\frac {
        \sqrt {2\Gamma\left( n/2+2 \right) }}{\left(n+2\right){\pi}^{n/4}}
        \end{split}
    \end{equation}
    which always has a solution. Otherwise
    \begin{eqnarray*}
        \frac{\big(\L(x_i^2)\big)^2}{\L_1(x_1^2x_2^2)}\leq \gamma\leq
        \frac{2
    \Gamma( \frac{n}{2}+2 ) } {{\pi }^{\frac{n}{2}} \left( n+2 \right)^2 }
    \end{eqnarray*}
    usually does not derive solution which means some points lie outside
    of the region.

    Particularly, if $\rho_i(x_i)=(1-x_i^2)^{\alpha_i}(\alpha_i>-1)$, then
    $$
    \L(x_j^4)-3\L(x_j^2)^2=-\frac{6}{(\alpha_i/2+3)^2(\alpha_i/2+5)}<0
    $$
    implies that the solution for $\gamma$ always exsits.
\end{rem}
\begin{rem}
    Since $A$ vanished when $n=7$, the total number of the points is
    $n^2+3n+1$ if $n=7$.
\end{rem}
\begin{rem}
    If
    $$1-{\frac {\sqrt {2}{\pi }^{\frac{n}{4}} \left( n+2 \right) }{2\sqrt {
    \Gamma( \frac{n}{2}+2 ) }}\cdot\sqrt{\gamma}}=0,\quad\text{or}\quad
    \gamma=\frac{\Gamma(\frac{n}{2}+1)}{\pi^{n/2}(n+2)},
    $$
    then $\tilde{\L}(x_j^2)=0$ and thus
    \begin{eqnarray*}
        \gamma\cdot Q(f)+(1-\gamma\cdot V)f(0,0,\ldots,0)
    \end{eqnarray*}
    is a cubature formula of degree three. Furthermore, it is easy to
    check that
    \begin{eqnarray*}
        \gamma\cdot Q(f)+\sum_{i=1}^n\gamma_i\frac{\partial^4f}{\partial
        x_i^4}f(0,0,\ldots,0)+(1-\sum_{i=1}^n\gamma_i-\gamma\cdot
        V)f(0,0,\ldots,0)
    \end{eqnarray*}
    is a cubature formula of degree five provided $f$ is smooth enough,
    where $$\gamma_i=\frac{\L(x_i^4)-3(\L(x_i^2))^2}{4!}.$$
\end{rem}
\section{Fifth degree formula for spherically symmetrical measure}

$\L$ is a spherically symmetrical integral functional means the region is of
the form $\{\X\in\R^n|p\leq ||\X||<q\}$ and the weight function $\rho(\X)$ is a
function of $||\X||$, where $<\X,\mathbf{Y}>=x_1y_1+\ldots+x_ny_n$ and
$||\X||=\sqrt{<\X,\X>}$ (see \cite{Hirao2009}). The classical examples are the
integrals over the ball and over the entire $\R^n$ space with Gaussian weight.
In this case, $\L(x_i^2)=\L(x_j^2)$ and $\L(x_i^2x_j^2)=\L(x_k^2x_l^2)$ for
any $i,j,k,l$. Taking
 $a_{ii}=\sqrt[4]{\frac{\L(x_1^2x_2^2)}{\gamma\cdot\L_1(x_1^2x_2^2)}}$ and the
 same mathod explained, we have
 \begin{eqnarray*}
     \tilde{Q}(x_i)&=& 0\\
     \tilde{Q}(x_ix_j)&=& \begin{cases}
{\frac {\sqrt {2}{\pi }^{\frac{n}{4}} \left( n+2 \right) }{2\sqrt {
    \Gamma( \frac{n}{2}+2 ) }}\cdot\sqrt{\gamma\cdot\L
        (x_1^2x_2^2)}}, & \text{for}\quad i=j;\\
        0, & \text{for}\quad i\neq j;\\
    \end{cases}\\
    \tilde{Q}(x_ix_jx_k)&=& 0;\\
    \tilde{Q}(x_ix_jx_kx_l)&=& \begin{cases}
        \L (x_i^2x_k^2), & \text{for}\quad i=j\ \text{and}\ k=l;\\
        3\left(\L (x_1^2x_2^2)\right), &
        \text{for}\quad i=j=k=l;\\
        0, & \text{others}.
    \end{cases}\\
    \tilde{Q}(x_ix_jx_kx_lx_m)&=&0
 \end{eqnarray*}
 and
\begin{eqnarray*}
    &&\tilde{\L}(1)=1-\gamma\cdot V,\\
    &&\tilde{\L}(x_i)=0,\\
    &&\tilde{\L}(x_ix_j)=\begin{cases}
        0, & \text{for}\quad i\neq j,\\
        \L(x_1^2)-\frac {\sqrt {2}{\pi }^{\frac{n}{4}} \left( n+2 \right) }{2\sqrt {
    \Gamma( \frac{n}{2}+2 ) }}\cdot\sqrt{\gamma\cdot\L
        (x_1^2x_2^2)}, & \text{for}\quad i=j,\\
    \end{cases}\\
    &&\tilde{\L}(x_ix_jx_k)=0,\\
    &&\tilde{\L}(x_ix_jx_kx_l)=\begin{cases}
        \L(x_1^4)-3\left(\L(x_1^2x_2^2)\right),&
        \text{for}\quad i=j=k=l,\\
        0, & \text{others,}\\
    \end{cases}\\
    &&\tilde{\L}(x_ix_jx_kx_lx_m)=0
\end{eqnarray*}
where $\tilde{\L}=\L-\tilde{Q}$.

It is a slightly different from the product case, because we have the
following
\begin{thm}
    Suppose that
    $$\L(f)=\int_{\Omega}f(\X)\rho(\X)\d\X$$ is a spherically symmetric integral functional, then
    $$\L(x_1^4)-3\L(x_1^2x_2^2)=0.$$
\end{thm}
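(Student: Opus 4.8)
The plan is to exploit the full orthogonal invariance that the spherical symmetry of $\L$ provides, and thereby reduce the identity to a one-line algebraic manipulation rather than to an evaluation of Gamma functions.

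First I would record the key invariance property: for every orthogonal matrix $O\in\R^{n\times n}$ one has $\L(f\circ O)=\L(f)$. This follows from the change of variables $\Y=O\X$ in $\L(f\circ O)=\int_{\Omega}f(O\X)\rho(\X)\,\d\X$: since $|\det O|=1$ the volume element is preserved, the shell $\Omega=\{\X:p\le\|\X\|<q\}$ satisfies $O\Omega=\Omega$, and $\rho$ depends only on $\|\X\|=\|O^{\mathrm T}\Y\|$ so that $\rho(O^{\mathrm T}\Y)=\rho(\Y)$. Hence $\L(f\circ O)=\int_{\Omega}f(\Y)\rho(\Y)\,\d\Y=\L(f)$. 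Two special cases are immediate and will be used repeatedly: taking $O$ to be the reflection $x_j\mapsto-x_j$ shows that every moment containing an odd power of some variable vanishes, and taking $O$ to be the transposition of two coordinate axes shows $\L(x_1^4)=\L(x_2^4)$.

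Next I would apply the invariance to a single carefully chosen rotation. Let $O$ be the orthogonal matrix acting as a $45^{\circ}$ rotation in the $(x_1,x_2)$-plane, for which $(O\X)_1=(x_1+x_2)/\sqrt2$. Applying $\L(f\circ O)=\L(f)$ to $f(\X)=x_1^4$ gives $\tfrac14\,\L\big((x_1+x_2)^4\big)=\L(x_1^4)$. Expanding $(x_1+x_2)^4=x_1^4+4x_1^3x_2+6x_1^2x_2^2+4x_1x_2^3+x_2^4$ and discarding the two odd terms by the reflection symmetry above leaves $\L(x_1^4)=\tfrac14\big(\L(x_1^4)+6\,\L(x_1^2x_2^2)+\L(x_2^4)\big)$. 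Using $\L(x_2^4)=\L(x_1^4)$ and solving the resulting scalar equation yields at once $2\,\L(x_1^4)=6\,\L(x_1^2x_2^2)$, i.e. $\L(x_1^4)-3\L(x_1^2x_2^2)=0$.

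The only real content is the invariance lemma of the first step; everything after it is elementary. I expect the main obstacle to be stating that step cleanly — in particular verifying $O\Omega=\Omega$ and $\rho\circ O^{\mathrm T}=\rho$ so that the change of variables genuinely returns $\L$. As a cross-check (and an alternative proof that avoids rotations altogether) one may instead factor $\L$ into a radial integral times a surface integral over $U_n$; the radial parts of $\L(x_1^4)$ and $\L(x_1^2x_2^2)$ coincide, and the surface integrals evaluate by Stroud's monomial formula to $2\Gamma(5/2)\Gamma(1/2)^{n-1}/\Gamma(\tfrac{n+4}{2})$ and $2\Gamma(3/2)^2\Gamma(1/2)^{n-2}/\Gamma(\tfrac{n+4}{2})$, whose ratio is exactly $\Gamma(5/2)\Gamma(1/2)/\Gamma(3/2)^2=3$.
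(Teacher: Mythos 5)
Your proof is correct, but it takes a genuinely different route from the paper. The paper proves the identity by brute-force computation: it passes to spherical coordinates via Stroud's transformation, writes each of $\L(x_1^4)$ and $\L(x_1^2x_2^2)$ as a common radial integral times a common factor $I_0$ times a one-dimensional angular integral, and evaluates the ratio $\int_{-\pi/2}^{\pi/2}\cos^4\theta_1\,\d\theta_1 \big/ \int_{-\pi/2}^{\pi/2}\cos^2\theta_1\sin^2\theta_1\,\d\theta_1 = (3\pi/8)/(\pi/8)=3$. You instead prove an orthogonal-invariance lemma, $\L(f\circ O)=\L(f)$ for every orthogonal $O$ (which is exactly where the hypotheses on $\Omega$ and $\rho$ enter, via $O\Omega=\Omega$ and $\rho\circ O^{\mathrm T}=\rho$), and then apply it to the $45^\circ$ rotation in the $(x_1,x_2)$-plane: expanding $\tfrac14\L\bigl((x_1+x_2)^4\bigr)=\L(x_1^4)$, killing the odd terms by a reflection, and using $\L(x_1^4)=\L(x_2^4)$ gives $2\L(x_1^4)=6\L(x_1^2x_2^2)$ immediately. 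Your argument is more conceptual and more general: it needs no Fubini factorization, no Jacobian, and no evaluation of special integrals, and it makes clear that the identity holds for any functional invariant under $O(n)$ and under sign changes — the ``$3$'' is forced by pure algebra. The paper's computation, by contrast, is self-contained and explicit, and its radial/angular factorization is reused in spirit elsewhere in the paper (it is also essentially your own cross-check, which replaces the paper's trigonometric integrals with Stroud's monomial formula on $U_n$, where $\Gamma(5/2)\Gamma(1/2)/\Gamma(3/2)^2=3$). Both proofs are complete; yours trades the explicit coordinates for the invariance lemma, which you correctly identify as the only step carrying real content.
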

\begin{proof}
    To calculate $\L(x_1^4)$ and $\L(x_1^2x_2^2)$, we introduce the
    transformation which was used in \cite[Page 33]{1971-Stroud-p-}
    \begin{eqnarray}
        \begin{split}
            &x_1=r\cos\theta_{n-1}\cos\theta_{n-2}\ldots\cos\theta_{2}\cos\theta_{1}\\
            &x_2=r\cos\theta_{n-1}\cos\theta_{n-2}\ldots\cos\theta_{2}\sin\theta_{1}\\
            &x_3=r\cos\theta_{n-1}\cos\theta_{n-2}\ldots\sin\theta_{2}\\
            &\ldots\ldots\ldots\ldots\\
            &x_{n-1}=r\cos\theta_{n-1}\sin\theta_{n-2}\\
            &x_n=r\sin\theta_{n-1}
        \end{split}
        \label{eq:transform}
    \end{eqnarray}
    The Jacobian of transformation \eqref{eq:transform} is
    $$
    J=r^{n-1}(\cos\theta_{n-1})^{n-2}(\cos\theta_{n-2})^{n-3}\ldots(\cos\theta_{3})^{2}(\cos\theta_{2})
    $$
    and therefore
    \begin{eqnarray*}
        \L(x_1^4)=\left\{\int_{p}^{q}+\int_{-q}^{-p}\right\}\rho(r)|r|^{n-1}r^4\d
        r\int_{-\pi/2}^{\pi/2}(\cos\theta_1)^4\d\theta_1\times I_0\\
        \L(x_1^2x_2^2)=\left\{\int_{p}^{q}+\int_{-q}^{-p}\right\}\rho(r)|r|^{n-1}r^4\d
        r\int_{-\pi/2}^{\pi/2}(\cos\theta_1)^2(\sin\theta_1)^2\d\theta_1\times I_0\\
    \end{eqnarray*}
    where
    $$
    I_0=\int_{-\pi/2}^{\pi/2}(\cos\theta_2)^5\d\theta_2\int_{-\pi/2}^{\pi/2}(\cos\theta_3)^6\d\theta_3\ldots
    \int_{-\pi/2}^{\pi/2}(\cos\theta_{n-1})^{n+2}\d\theta_{n-1}.
    $$
    Hence
    \begin{eqnarray*}
        \frac{\L(x_1^4)}{\L(x_1^2x_2^2)}=\frac{\int_{-\pi/2}^{\pi/2}(\cos\theta_1)^4\d\theta_1}{\int_{-\pi/2}^{\pi/2}(\cos\theta_1)^2(\sin\theta_1)^2\d\theta_1}=\frac{3\pi/8}{\pi/8}=3
    \end{eqnarray*}
    and therefore $\L(x_1^4)-3\L(x_1^2x_2^2)=0$. This completes the proof.
\end{proof}
Let
\begin{eqnarray*}
    \L(x_1^2)-\frac {\sqrt {2}{\pi }^{\frac{n}{4}} \left( n+2 \right) }{2\sqrt {
    \Gamma( \frac{n}{2}+2 ) }}\cdot\sqrt{\gamma\cdot\L
        (x_1^2x_2^2)}=0
\end{eqnarray*}
that is
\begin{eqnarray*}
    \gamma=\frac{(\L(x_1^2))^2}{\L(x_1^2x_2^2)}\cdot\frac{2\Gamma(n/2+2)}{\pi^{n/2}(n+2)^2}.
\end{eqnarray*}
In this case, we get a fifth degree formula with at most $n^2+3n+3$ points
\begin{eqnarray}
    \begin{split}
    C_n(f)&=\tilde{A}\sum_{j=1}^{n+1}\big[f(\tilde{\mathbf{a}}^{(j)})+f(-\tilde{\mathbf{a}}^{(j)})\big]
    +\tilde{B}\sum_{j=1}^{n(n+1)/2}\big[f(\tilde{\mathbf{b}}^{(j)})+f(-\tilde{\mathbf{b}}^{(j)})\big]\\
    &\quad +(1-\gamma\cdot
    V)f(0,0,\ldots,0).
       \end{split}
    \label{eq:Mini_cubature}
\end{eqnarray}
If $n=7$, $\tilde{A}=0$ will yields a minimal cubature formula which only
contains $n^2+n+1$ points.

\section{Numerical Results}
Assume $n\geq 4$.

\begin{itemize}
    \item Symmetric product region.
\end{itemize}

We consider the case of weight functions $\rho_1=\rho_2=\ldots=\rho_n=1/2$.

In this case,
\begin{eqnarray*}
    \tilde{\L}(x_i^4)=\L(x_j^4)-3(\L(x_j^2))^2=1/5-1/3=-2/15<0.
\end{eqnarray*}
To confirm all the points inside the region, it follows from
\eqref{eq:condition} that
\begin{eqnarray*}
    \gamma\geq \frac{2\Gamma(n/2+2)}{9\pi^{n/2}}.
\end{eqnarray*}
If we take $\gamma=\frac{2\Gamma(n/2+2)}{9\pi^{n/2}}$, then
\begin{eqnarray*}
    &&a_{ii}=1,\quad v_{i,j}=\pm\frac{\sqrt{30(n-1)}}{5(n-1)},\\
    &&w_{i,j}=-\frac{5}{108}n^2+\frac{5}{54}n-\frac{5}{108},\quad
    C=\frac{5}{54}n^3-\frac{8}{27}n^2-\frac{7}{54}n+1.
\end{eqnarray*}
Due to \eqref{eq:final_cubature}, the corresponding cubature formula can be
written as
\begin{eqnarray*}
    &&\frac{1}{2^n}\int_{[-1,1]^n}f(\X)\d\X\approx
    (\frac{5}{54}n^3-\frac{8}{27}n^2-\frac{7}{54}n+1)f(0,0,\ldots,0)\\
    &&+(-\frac{5}{108}n^2+\frac{5}{54}n-\frac{5}{108})\sum_{i=1}^n\left(
     f\left(\frac{\sqrt{30(n-1)}}{5(n-1)}\mathbf{e}_i\right)+f\left(-\frac{\sqrt{30(n-1)}}{5(n-1)}\mathbf{e}_i\right)\right) \\
     &&+\frac{(7-n)n^2}{18(n+1)^2}
    \sum_{j=1}^{n+1}\big[f(\sqrt{n/2+1}\cdot\mathbf{a}^{(j)})+f(-\sqrt{n/2+1}\cdot\mathbf{a}^{(j)})\big]\\
    &&+\frac{2(n-1)^2}{9(n+1)^2}\sum_{j=1}^{n(n+1)/2}\big[f(\sqrt{n/2+1}\cdot\mathbf{b}^{(j)})+f(-\sqrt{n/2+1}\cdot\mathbf{b}^{(j)})\big]
\end{eqnarray*}

\begin{itemize}
    \item Spherically symmetric integral
\end{itemize}

(1). Entire $n$-dimensional space with Gaussian weight function.

Simple calculation yields
\begin{eqnarray*}
    \gamma=\frac{\Gamma(n/2+1)}{\pi^{n/2}(n+2)},\quad\text{and}\quad
    a_{ii}=\sqrt{n/2+1}.
\end{eqnarray*}
Thus the fifth degree formula can be written as
\begin{eqnarray}
    \begin{split}
    &\int_{\mathbf{R}^n}e^{-x_1^2-\ldots-x_n^2}f(x_1,x_2,\ldots,x_n)\d
    x_1\ldots\d x_n\approx\frac{2\pi^{n/2}}{n+2}f(0,0,\ldots,0)\\
    &+\frac{n^2(7-n)\pi^{n/2}}{2(n+1)^2(n+2)^2}
    \sum_{j=1}^{n+1}\big[f(\sqrt{n/2+1}\cdot\mathbf{a}^{(j)})+f(-\sqrt{n/2+1}\cdot\mathbf{a}^{(j)})\big]\\
    &+\frac{2(n-1)^2\pi^{n/2}}{(n+1)^2(n+2)^2}\sum_{j=1}^{n(n+1)/2}\big[f(\sqrt{n/2+1}\cdot\mathbf{b}^{(j)})+f(-\sqrt{n/2+1}\cdot\mathbf{b}^{(j)})\big]
    \end{split}\label{lu}
\end{eqnarray}
which is same with Lu's formula \cite{2005-Lu-p613-624}.

(2). The $n$-dimensional unit ball.

Simple calculation yields
\begin{eqnarray*}
\gamma={\frac {2\left( n+4 \right) \Gamma  \left( n/2+2 \right) }{
 \left( n+2 \right) ^{3}{\pi }^{n/2}}},\quad\text{and}\quad
 a_{ii}=\sqrt{\frac{n+2}{n+4}}.
\end{eqnarray*}
Thus the fifth degree formula can be written as
\begin{eqnarray*}
    &&\int_{\mathbf{B}^n}f(x_1,x_2,\ldots,x_n)\d
    x_1\ldots\d x_n\approx\frac{8\pi^{n/2}}{n(n+2)^2\Gamma(n/2)}f(0,0,\ldots,0)\\
    &&+
    {\frac { \left(7-n \right) {n} \left( n+4 \right)\pi^{n/2} }{
 \left( n+1 \right) ^{2} \left( n+2 \right) ^{3}\Gamma(n/2)}}
    \sum_{j=1}^{n+1}\big[f(\sqrt{\frac{n+2}{n+4}}\cdot\mathbf{a}^{(j)})+f(-\sqrt{\frac{n+2}{n+4}}\cdot\mathbf{a}^{(j)})\big]\\
    &&+
{\frac {4 \left( n-1 \right) ^{2} \left( n+4 \right)\pi^{n/2} }{ n\left( n+1
 \right) ^{2} \left( n+2 \right) ^{3}\Gamma(n/2)}}
    \sum_{j=1}^{n(n+1)/2}\big[f(\sqrt{\frac{n+2}{n+4}}\cdot\mathbf{b}^{(j)})+f(-\sqrt{\frac{n+2}{n+4}}\cdot\mathbf{b}^{(j)})\big].
\end{eqnarray*}

(3). The $n$-dimensional Spherical Shell.

The region $S_n^{\text{shell}}$ is
\begin{eqnarray*}
   S_n^{\text{shell}}=\{(x_1,x_2,\ldots,x_n)| r\leq x_1^2+x_2^2+\ldots+x_n^2\leq
   1\}
\end{eqnarray*}
and weight function $\rho(\X)=1$. The monomial integrals can be found in
\cite{1971-Stroud-p-}.
 In this case,
 \begin{eqnarray*}
\gamma={\frac {2\left( n+4 \right) \Gamma  \left( n/2+2 \right) }{
 \left( n+2 \right) ^{3}{\pi }^{n/2}}},\quad\text{and}\quad
 a_{ii}=\sqrt[4]{1-r^{n+4}}\cdot\sqrt{\frac{n+2}{n+4}}.
\end{eqnarray*}
Thus the fifth degree formula can be written as
\begin{eqnarray*}
    &&\int_{S_n^{\text{shell}}}f(x_1,x_2,\ldots,x_n)\d
    x_1\ldots\d x_n\approx\frac{8\pi^{n/2}(1-r^n)}{n(n+2)^2\Gamma(n/2)}f(0,0,\ldots,0)\\
    &&+
    {\frac { \left(7-n \right) {n} \left( n+4 \right)\pi^{n/2}(1-r^n) }{
 \left( n+1 \right) ^{2} \left( n+2 \right) ^{3}\Gamma(n/2)}}
    \sum_{j=1}^{n+1}\big[f(\sqrt[4]{1-r^{n+4}}\cdot\sqrt{\frac{n+2}{n+4}}\cdot\mathbf{a}^{(j)})\\
    &&\hspace{6cm}+f(-\sqrt[4]{1-r^{n+4}}\cdot\sqrt{\frac{n+2}{n+4}}\cdot\mathbf{a}^{(j)})\big]\\
    &&+
{\frac {4 \left( n-1 \right) ^{2} \left( n+4 \right)\pi^{n/2}(1-r^n) }{ n\left( n+1
 \right) ^{2} \left( n+2 \right) ^{3}\Gamma(n/2)}}
    \sum_{j=1}^{n(n+1)/2}\big[f(\sqrt[4]{1-r^{n+4}}\cdot\sqrt{\frac{n+2}{n+4}}\cdot\mathbf{b}^{(j)})\\
    &&\hspace{6cm}+f(-\sqrt[4]{1-r^{n+4}}\cdot\sqrt{\frac{n+2}{n+4}}\cdot\mathbf{b}^{(j)})\big].
\end{eqnarray*}
The origin is outside of the region.

(4). Entire $n$-dimensional space with weight function
$\text{exp}(-\sqrt{x_1^2+x_2^2+\ldots+x_n^2})$.

Simple calculation yields
\begin{eqnarray*}
    \gamma=\frac{(n+1)\Gamma(n/2+1)}{\pi^{n/2}(n+2)(n+3)},\quad\text{and}\quad
    a_{ii}=\sqrt{(n+2)(n+3)}.
\end{eqnarray*}
Thus the fifth degree formula can be written as
\begin{eqnarray*}
    &&\int_{\mathbf{R}^n}e^{-\sqrt{x_1^2+\ldots+x_n^2}}f(x_1,x_2,\ldots,x_n)\d
    x_1\ldots\d
    x_n\approx\frac{(2n+3)\pi^{\frac{n-1}{2}}2^{n+2}\Gamma(\frac{n+1}{2})}{(n+2)(n+3)}f(0,0,\ldots,0)\\
    &&+\frac{n^2(7-n)\pi^{\frac{n-1}{2}}2^n\Gamma(\frac{n+1}{2})}{(n+3)(n+1)(n+2)^2}
    \sum_{j=1}^{n+1}\big[f(\sqrt{(n+2)(n+3)}\cdot\mathbf{a}^{(j)})+f(-\sqrt{(n+2)(n+3)}\cdot\mathbf{a}^{(j)})\big]\\
    &&+\frac{4(n-1)^2\pi^{\frac{n-1}{2}}2^n\Gamma(\frac{n+1}{2})}{(n+3)(n+1)(n+2)^2}\sum_{j=1}^{n(n+1)/2}\big[f(\sqrt{(n+2)(n+3)}\cdot\mathbf{b}^{(j)})+f(-\sqrt{(n+2)(n+3)}\cdot\mathbf{b}^{(j)})\big].
\end{eqnarray*}

\section{conclusion}
In this paper we develop a method to construct fifth degree cubature formulae
for $n$-cube with symmetric measure and $n$-dimensional spherically
symmetrical region. By a well-known cubature formula due to Mysovskikh
\cite{1981-Mysovskikh-p-}, the  cubature problems to be dealt with are
turned into a family of one-dimensional moment problems, which simplifies the construction process. Numerical results show that the number of the cubature rule constructed
by our method is minimal among all the existing cubature formulae.
Furthermore, all the cubature formulae except Lu's formula \eqref{lu} seem to
new.


\end{document}